\renewcommand{\algocf@captiontext}[2]{#1\algocf@typo. \AlCapFnt{}#2} % text of caption
\def\@algocf@capt@plain{top}
\renewcommand{\algocf@makecaption}[2]{%
  \addtolength{\hsize}{\algomargin}%
  \sbox\@tempboxa{\algocf@captiontext{#1}{#2}}%
  \ifdim\wd\@tempboxa >\hsize%     % if caption is longer than a line
    \hskip .5\algomargin%
    \parbox[t]{\hsize}{\algocf@captiontext{#1}{#2}}% then caption is not centered
  \else%
    \global\@minipagefalse%
    \hbox to\hsize{\box\@tempboxa}% else caption is centered
  \fi%
  \addtolength{\hsize}{-\algomargin}%
}
\def\tr{\text{tr}}
\title{A simple regression equivalence of Pillai's trace statistic}
\author{Xia Shen, Zheng Ning, Yudi Pawitan}
\affil{Department of Medical Epidemiology and Biostatistics, Karolinska Institutet\\ SE-17 177 Stockholm, Sweden}
\begin{document}

\maketitle

Consider Pillai's trace $V^{(1)}$ statistic (denoted as $V$ hereafter) \citep{pillai55} for a multivariate analysis of variance (MANOVA) problem with an independent variable ${x}_{n\times1}$ and multiple dependent variables ${Y}_{n\times k}$, where ${Y}$ is column-full-ranked. Let us reverse the problem as a linear multiple regression
\begin{equation}
	{x} = a{1} + {Y}{b} + {e}.
	\label{eq1}
\end{equation}
Let $ \hat{{b}}=({Y}^{T}{Y})^{-1}{Y}^{T}{x} $ be the least-squares estimate of $ {b} $. Define a score $ {s}_{n\times 1} = {Y}\hat{{b}} $ as a linear combination of the variables in $ {Y} $, and fit a simple regression model
\begin{equation}
	{s} = \mu{1} + \beta {x}+ {\epsilon}.
	\label{eq2}
\end{equation}

\begin{theorem}
\label{theorem1}
Let $ V $ be Pillai's trace statistic for MANOVA of ${Y}$ on ${x}$, and $ \hat{\beta} $ be the least-squares estimate of $ \beta $, then $V = \hat{\beta}$.
\end{theorem}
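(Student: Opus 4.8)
The plan is to show that $V$ and $\hat{\beta}$ are both equal to the coefficient of determination of the reverse regression~\eqref{eq1}, which is a single squared canonical correlation because ${x}$ is one column. First I would clear the intercepts away. Since \eqref{eq1} and \eqref{eq2} each contain a constant term, and since Pillai's trace for the MANOVA of ${Y}$ on ${x}$ is unchanged when a constant is added to ${x}$ or to any column of ${Y}$ (such shifts being absorbed by the grand mean of the multivariate model), I may and would assume throughout that ${1}^{\T}{Y}=0$ and ${1}^{\T}{x}=0$. Under this normalization $\hat{{b}}=({Y}^{\T}{Y})^{-1}{Y}^{\T}{x}$ is exactly the least-squares slope of \eqref{eq1}, and the score ${s}={Y}\hat{{b}}=P_{Y}{x}$, where $P_{Y}={Y}({Y}^{\T}{Y})^{-1}{Y}^{\T}$, is itself centered because $P_{Y}{1}=0$.

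Next I would put the two quantities in projection form and read off the identity. For the MANOVA of ${Y}$ on ${x}$ the hypothesis and error sums of squares and cross-products are $H={Y}^{\T}P_{x}{Y}$ and $E={Y}^{\T}(I_n-P_{x}){Y}$ with $P_{x}={x}({x}^{\T}{x})^{-1}{x}^{\T}$ of rank one, so $H+E={Y}^{\T}{Y}$, and the cyclic property of the trace gives
\[
V=\tr\{H(H+E)^{-1}\}=\tr\{P_{x}P_{Y}\}=\frac{{x}^{\T}P_{Y}{x}}{{x}^{\T}{x}}.
\]
On the other side, since ${s}$ and ${x}$ are centered, the least-squares slope in \eqref{eq2} is $\hat{\beta}={s}^{\T}{x}/({x}^{\T}{x})$, and inserting ${s}=P_{Y}{x}$ and using $P_{Y}=P_{Y}^{\T}=P_{Y}^{2}$ gives $\hat{\beta}={x}^{\T}P_{Y}{x}/({x}^{\T}{x})$, the same scalar; hence $V=\hat{\beta}$. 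The conceptual content is that ${s}^{\T}{x}={s}^{\T}{s}$ because the residual ${x}-{s}$ of \eqref{eq1} is orthogonal to ${s}$, so $\hat{\beta}$ is exactly the $R^{2}$ of \eqref{eq1}, while for a rank-one design Pillai's trace equals that same $R^{2}$, its single squared canonical correlation.

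The step I expect to need the most care is the intercept bookkeeping: one has to be sure that the ${Y}$ and ${x}$ appearing in $H$ and $E$ are mean-corrected in exactly the same way as those defining ${s}$, so that the two projection expressions genuinely reduce to the one scalar ${x}^{\T}P_{Y}{x}/({x}^{\T}{x})$; once that is pinned down the algebra is routine. It is precisely the rank-one structure of $P_{x}$ — that ${x}$ is a single column — that collapses Pillai's trace to this scalar, which the simple regression~\eqref{eq2} then reproduces.
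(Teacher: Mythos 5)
Your argument is correct, but it is a genuinely different proof from the paper's. The paper works entirely uncentred: it inverts $A^{\T}A$ for $A=(1,Y)$ blockwise to obtain $\hat b$ and $s$ explicitly, writes out $(B^{\T}B)^{-1}B^{\T}$ for $B=(1,x)$, and then verifies $V=\hat\beta$ through a chain of trace and determinant manipulations involving $F_{22}$ and $\det(B^{\T}B)$. You instead centre once and for all and collapse both sides to the single scalar $\tr(P_xP_Y)=x^{\T}P_Yx/(x^{\T}x)$, which is shorter, explains why the identity holds (with a rank-one design Pillai's trace is the squared multiple correlation of the reverse regression), and yields the paper's later observation $\hat\beta=V=R^{2}$ as a byproduct rather than an afterthought; what the paper's brute-force route buys is that it needs no reduction step at all. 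Two points in your reduction deserve explicit sentences. (i) The ``without loss of generality'' requires invariance of both sides: you argue that $V$ is unchanged by adding constants to $x$ or to the columns of $Y$, but you must also note that $\hat\beta$ is, which holds because the intercept-adjusted $\hat b$ is unchanged by such shifts, so $s=Y\hat b$ moves only by an additive constant and the slope in \eqref{eq2}, a model with its own intercept, is unaffected; without this line you have only proved the centred case. (ii) Your $E=Y^{\T}(I-P_x)Y$ is the error matrix of the intercept-containing MANOVA only because centring makes $Y^{\T}11^{\T}Y/n=0$; in general $E=Y^{\T}\{I-B(B^{\T}B)^{-1}B^{\T}\}Y$ as in the paper, and for centred $x$ one has $B(B^{\T}B)^{-1}B^{\T}=\frac{1}{n}11^{\T}+P_x$, which settles the ``intercept bookkeeping'' you rightly flagged. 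As a bonus, your normalization removes a wrinkle in the paper's statement: $\hat b=(Y^{\T}Y)^{-1}Y^{\T}x$ is the least-squares estimate in \eqref{eq1} only once $Y$ is centred (the paper's own proof in fact uses the intercept-adjusted estimate), and under your convention the two coincide.
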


\begin{proof}[of Theorem~\ref{theorem1}]

Defining ${A} = ({1},{Y})$, we have

\begin{equation}
	{A}^{T}{A} = \left(
	\begin{matrix}
	n & {1}^{T}{Y}\\
	{Y}^{T}{1} & {Y}^{T}{Y}
	\end{matrix}\right).\nonumber
\end{equation}

Then the inverse of ${A}^{T}{A}$ is

\begin{equation*}
	({A}^{T}{A})^{-1} = 
	\left(\begin{matrix}
	\text{F}_{11} & -\text{F}_{11}{1}^{T}{Y}\left({Y}^{T}{Y}\right)^{-1}\\
	-\frac{1}{n}{F}_{22}{Y}^{T}{1} & {F}_{22}
	\end{matrix}\right),
\end{equation*}

where $\text{F}_{11} = \{n - {1}^{T}{Y}\left({Y}^{T}{Y}\right)^{-1}{Y}^{T}{1}\}^{-1}$ and ${F}_{22} = \left({Y}^{T}{Y} - {Y}^{T}{1}{1}^{T}{Y} / n\right)^{-1}$.

Thus 

\begin{equation*}
	\left(\begin{matrix}
	\hat{a} \\ \hat{{b}}
	\end{matrix}\right)
	 = ({A}^{T}{A})^{-1}{A}^{T}{x} = 
	\left(\begin{matrix}
	\text{F}_{11}{1}^{T}{x} - \text{F}_{11}{1}^{T}{Y}\left({Y}^{T}{Y}\right)^{-1}{Y}^{T}{x}\\
	-\frac{1}{n}{F}_{22}{Y}^{T}{1}{1}^{T}{x} + {F}_{22}{Y}^{T}{x}
	\end{matrix}\right),
\end{equation*}

and 
\begin{align*}
	{s} &= {Y}\hat{{b}} = -\frac{1}{n}{Y}{F}_{22}{Y}^{T}{1}{1}^{T}{x} + {Y}{F}_{22}{Y}^{T}{x}.
\end{align*}

Let ${B} = \left(\begin{matrix}{1},{x}\end{matrix}\right)$, so that
\begin{equation}
	{B}^{T}{B} = \left(
	\begin{matrix}
	n & {1}^{T}{x}\\
	{x}^{T}{1} & {x}^{T}{x}
	\end{matrix}\right),\nonumber
\end{equation}

\begin{equation}\label{bbt}
	\det(B^{T}B) = nx^{T}x - x^{T}11^{T}x,
\end{equation}

\begin{equation*}
	\left({B}^{T}{B}\right)^{-1}{B}^{T} = 
	\frac{1}{\det\left({B}^{T}{B}\right)}
	\left(\begin{matrix}
		{x}^{T}{x}{1}^{T} - {x}^{T}{1}{x}^{T}\\
		-{1}^{T}{x}{1}^{T} + n{x}^{T}
	\end{matrix}\right).
\end{equation*}

Therefore
\begin{align*}
	\hat{\beta} &= \left(\begin{matrix}
	0, 1\\
	\end{matrix}\right)\left({B}^{T}{B}\right)^{-1}{B}^{T}{s}\\
	&= \frac{1}{\det\left({B}^{T}{B}\right)}\left(-{1}^{T}{x}{1}^{T} + n{x}^{T}\right)\left(-\frac{1}{n}{Y}{F}_{22}{Y}^{T}{1}{1}^{T}{x} + {Y}{F}_{22}{Y}^{T}{x}\right).
\end{align*}

By definition, Pillai's trace $V = \text{tr}\{\left({T} - {E}\right){T}^{-1}\}$, where
\begin{align}
	{E} &= {Y}^{T}\left\{{I} - {B}\left({B}^{T}{B}\right)^{-1}{B}^{T}\right\}{Y}, \nonumber\\
	\label{T}
	{T} &= {Y}^{T}\left({I} - \frac{1}{n}{1}{1}^{T}\right){Y}.
\end{align}

Hence
\begin{equation}\label{T-E}
	{T} - {E} = {Y}^{T}\left\{{B}\left({B}^{T}{B}\right)^{-1}{B}^{T} - \frac{1}{n}{1}{1}^{T}\right\}{Y}. 
\end{equation}

Notice in \eqref{T}, ${T}^{-1} = {F}_{22}$, so combining with \eqref{bbt} and \eqref{T-E}, we get
\begin{eqnarray*}
	V &=& \text{tr}\left\{\left({T} - {E}\right){T}^{-1}\right\}\\
	&=& \tr\left\{{Y}^{T}\left({B}\left({B}^{T}{B}\right)^{-1}{B}^{T} - \frac{1}{n}{1}{1}^{T}\right){Y}{F}_{22} \right\}\\
	&=& \tr\left\{\left({B}\left({B}^{T}{B}\right)^{-1}{B}^{T} - \frac{1}{n}{1}{1}^{T}\right){Y}{F}_{22}{Y}^{T} \right\}\\
	&=&
	\tr\left\{\left(\frac{1}{\det\left({B}^{T}{B}\right)}\left({1}{x}^{T}{x}{1}^{T} - {1}{1}^{T}{x}{x}^{T}\right) - \frac{1}{n}{1}{1}^{T}\right){Y}{F}_{22}{Y}^{T} \right\} \\
	&\quad+& \tr\left\{\frac{1}{\det\left({B}^{T}{B}\right)}\left(-{x}{1}^{T}{x}{1}^{T} + n{x}{x}^{T}\right){Y}{F}_{22}{Y}^{T}\right\}\\
	&=& %%%%
	\tr\left\{\frac{1}{\det\left({B}^{T}{B}\right)}\left({1}{x}^{T}{x}{1}^{T} - \frac{1}{n}{1}\det\left({B}^{T}{B}\right){1}^{T} - {1}{1}^{T}{x}{x}^{T}\right){Y}{F}_{22}{Y}^{T} \right\} \\
	&\quad+& \tr\left\{\frac{1}{\det\left({B}^{T}{B}\right)}\left(-{x}{1}^{T}{x}{1}^{T} + n{x}{x}^{T}\right){Y}{F}_{22}{Y}^{T}\right\}\\
	&=&
	\frac{1}{\det\left({B}^{T}{B}\right)}\tr\left\{\left(\frac{1}{n}{1}{1}^{T}{x}{1}^{T}{x}{1}^{T} - {1}{1}^{T}{x}{x}^{T}\right){Y}{F}_{22}{Y}^{T} \right\}\\
	&\quad+& \frac{1}{\det\left({B}^{T}{B}\right)}\tr\left\{\left(-{1}^{T}{x}{1}^{T} + n{x}^{T}\right){Y}{F}_{22}{Y}^{T}{x}\right\}\\
	&=& \frac{1}{\det\left({B}^{T}{B}\right)}\tr\left\{\left({1}^{T}{x}{1}^{T} - n{x}^{T}\right)\frac{1}{n}{Y}{F}_{22}{Y}^{T}{1}{1}^{T}{x} \right\}\\
	&\quad+& \frac{1}{\det\left({B}^{T}{B}\right)}\tr\left\{\left(-{1}^{T}{x}{1}^{T} + n{x}^{T}\right){Y}{F}_{22}{Y}^{T}{x}\right\}\\
	&=& \frac{1}{\det\left({B}^{T}{B}\right)}\left(-{1}^{T}{x}{1}^{T} + n{x}^{T}\right)\left(-\frac{1}{n}{Y}{F}_{22}{Y}^{T}{1}{1}^{T}{x} + {Y}{F}_{22}{Y}^{T}{x}\right)\\
	&=& \hat{\beta}
\end{eqnarray*}
\end{proof}

Given the numerical equivalence of $V$ and $\hat{\beta}$, $\hat{\beta}$ has the same distribution as $V$. Note that the correlation coefficient between the fitted values and original response of regression (\ref{eq1}) is the same as that of regression (\ref{eq2}). Assuming each column of $Y$ follows a Gaussian distribution, let $R^{2}$ be the coefficient of determination of regressions (\ref{eq1}) and (\ref{eq2}), then in regression (\ref{eq2}), we have
\begin{equation*}
R^{2} = \hat{\beta}\frac{Cov({x},{s})}{V({s})}
\end{equation*}
Since ${x} = \hat{a}{1} + {s} + \hat{{e}}$ and $Cov({s}, \hat{a}{1} + \hat{{e}})=0$, together with Theorem (\ref{theorem1}), 
\begin{equation*}
\hat{\beta} = V = R^{2}
\end{equation*}
The F-statistic of the multiple regression (\ref{eq1}) can be expressed as a function of $R^{2}$, i.e.
\begin{equation*}
F = \frac{R^{2}/k}{(1 - R^{2})/(n - k - 1)}
\end{equation*}
which is the same formula for the F-statistic of Pillai's trace $V$ \citep{pillai60}. After rearranging,
\begin{equation*}
\hat{\beta} = V = R^{2} = \frac{kF}{(n - k - 1) + kF}
\end{equation*}
As $F\sim F(k,n-k-1)$, we have
\begin{equation*}
\hat{\beta} = V = R^{2} \sim Beta\left(\frac{k}{2}, \frac{n - k - 1}{2}\right)
\end{equation*}
which is the exact distribution of $\hat{\beta}$. In practice, the standard error of $\hat{\beta}$ can be obtained by Gaussian approximation of the Beta distribution, which simplifies the significance test of $\hat{\beta}$ as a Wald test. 

$\hat{\beta}$ is therefore a simple linear regression effect for a multivariate analysis. This single effect $\hat{\beta}$ is particularly useful in multivariate biological studies, so that the biomarker effect can be interpreted and replicated with meaning, i.e. the effect on the score $s$.

\section*{Author contributions}
X.S. initiated and coordinated the study. Z.N. and X.S. proved the theorems. X.S., Z.N. and Y.P. wrote the manuscript and approved the final version.

\bibliographystyle{plainnat}
\bibliography{pillai}

\begin{thebibliography}{2}
\providecommand{\natexlab}[1]{#1}
\providecommand{\url}[1]{\texttt{#1}}
\expandafter\ifx\csname urlstyle\endcsname\relax
  \providecommand{\doi}[1]{doi: #1}\else
  \providecommand{\doi}{doi: \begingroup \urlstyle{rm}\Url}\fi

\bibitem[Pillai(1955)]{pillai55}
K.~C.~S. Pillai.
\newblock Some new test criteria in multivaraite analysis.
\newblock \emph{Ann. Math. Statist.}, 26:\penalty0 117--121, 1955.

\bibitem[Pillai(1960)]{pillai60}
K.~C.~S. Pillai.
\newblock \emph{Statistical tables for tests of multivariate hypothese}.
\newblock Manila: Statistical Center, University of the Philippines, 1960.

\end{thebibliography}

\end{document}